\documentclass{amsart}

\newtheorem{theorem}{Theorem}
\theoremstyle{plain}

\newtheorem{corollary}{Corollary}

\newtheorem{lemma}{Lemma}

\numberwithin{equation}{section}

\begin{document}

\noindent
\title[Gr\"{u}ss type inequality]{A Gr\"{u}ss type inequality for vector-value functions in Hilbert $C^*$-modules}
\newline
\author[A. G. Ghazanfari]{Amir Ghasem Ghazanfari}

\address{Department of Mathematics, Lorestan University, P. O. Box 465, Khoramabad, Iran.}

\email{ghazanfari.amir@gmail.com}

\subjclass[2000]{46L08, 46H25, 26D15.}

\keywords{Hilbert $C^*$-modules, Gr\"{u}ss inequality, Landau type inequality, Bochner integral.}

\begin{abstract}
In this paper we prove a version of Gr\"{u}ss' integral inequality
for mappings with values in Hilbert $C^*$-modules. Some applications for such functions are also given.
\end{abstract}

\maketitle

\section{introduction}
\noindent
In 1934, G. Gr\"{u}ss \cite{gru} showed that for two Lebesgue integrable
functions $f,g:[a,b]\rightarrow \mathbb{R}$,
\begin{equation*}
\left\vert \frac{1}{b-a}\int_{a}^{b}f(t)g(t)dt-\frac{1}{b-a}%
\int_{a}^{b}f(t)dt\frac{1}{b-a}\int_{a}^{b}g(t)dt\right\vert \leq \frac{1}{4}%
(M-m)(N-n),
\end{equation*}%
provided $m,M,n,N$ are real numbers with the property $-\infty <m\leq f\leq
M<\infty $ and $-\infty <n\leq g\leq N<\infty \quad \text{a.e. on }[a,b].$
The constant $\frac{1}{4}$ is best possible in the sense that it cannot be
replaced by a smaller constant.

The following inequality of Gr\"{u}ss type in real or complex inner product
spaces is known \cite{dra1}.

\begin{theorem}\label{t1}
Let $(H;\left\langle \cdot ,\cdot \right\rangle )$ be an inner
product space over $\mathbb{K}\quad (\mathbb{K}=\mathbb{C},\mathbb{R})$ and $%
e\in H,\Vert e\Vert =1$. If $\alpha ,\beta ,\lambda ,\mu \in \mathbb{K}$ and
$x,y\in H$ are such that conditions
\begin{equation*}
Re\langle \alpha e-x,x-\beta e\rangle \geq 0,\quad Re\langle
\lambda e-y,y-\mu e\rangle \geq 0
\end{equation*}%
or, equivalently,
\begin{equation*}
\left\Vert x-\frac{\alpha +\beta }{2}e\right\Vert \leq \frac{1}{2}|\alpha
-\beta |,\quad \left\Vert y-\frac{\lambda +\mu }{2}e\right\Vert \leq \frac{1%
}{2}|\lambda -\mu |
\end{equation*}%
hold, then the following inequality holds
\begin{equation}\label{1.1}
\left\vert \left\langle x,y\right\rangle -\left\langle x,e\right\rangle
\left\langle e,y\right\rangle \right\vert \leq \frac{1}{4}|\alpha -\beta
||\lambda -\mu |.
\end{equation}%
The constant $\frac{1}{4}$ is best possible in (\ref{1.1}).
\end{theorem}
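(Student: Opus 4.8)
The plan is to reduce the bilinear expression on the left-hand side to an inner product of the components of $x$ and $y$ orthogonal to $e$, and then to bound each such component separately using the hypotheses. First I would record the identity
\begin{equation*}
\langle x,y\rangle-\langle x,e\rangle\langle e,y\rangle=\big\langle x-\langle x,e\rangle e,\ y-\langle y,e\rangle e\big\rangle,
\end{equation*}
which follows by expanding the right-hand side and using $\Vert e\Vert=1$ together with $\langle e,y\rangle=\overline{\langle y,e\rangle}$. Applying the Cauchy--Schwarz inequality to the right-hand side then gives
\begin{equation*}
\left\vert\langle x,y\rangle-\langle x,e\rangle\langle e,y\rangle\right\vert\leq\big\Vert x-\langle x,e\rangle e\big\Vert\,\big\Vert y-\langle y,e\rangle e\big\Vert,
\end{equation*}
so it suffices to control each of the two projection norms.

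The heart of the argument is the estimate $\Vert x-\langle x,e\rangle e\Vert^{2}=\Vert x\Vert^{2}-\vert\langle x,e\rangle\vert^{2}\leq\tfrac14\vert\alpha-\beta\vert^{2}$, together with its symmetric counterpart for $y$ with $\lambda,\mu$. To prove it I would expand $\operatorname{Re}\langle\alpha e-x,x-\beta e\rangle$, using $\Vert e\Vert=1$, to obtain
\begin{equation*}
\operatorname{Re}\langle\alpha e-x,x-\beta e\rangle=\operatorname{Re}\big((\alpha+\beta)\overline{\langle x,e\rangle}\big)-\operatorname{Re}(\alpha\bar\beta)-\Vert x\Vert^{2},
\end{equation*}
so the first hypothesis becomes $\Vert x\Vert^{2}\leq\operatorname{Re}\big((\alpha+\beta)\overline{\langle x,e\rangle}\big)-\operatorname{Re}(\alpha\bar\beta)$. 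Subtracting $\vert\langle x,e\rangle\vert^{2}$ from both sides and completing the square, the gap between $\tfrac14\vert\alpha-\beta\vert^{2}$ and the resulting right-hand side collapses to $\left\vert\tfrac{\alpha+\beta}{2}-\langle x,e\rangle\right\vert^{2}\geq0$, which yields the claimed bound. Substituting the two projection estimates into the Cauchy--Schwarz inequality then produces exactly $\tfrac14\vert\alpha-\beta\vert\,\vert\lambda-\mu\vert$.

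I expect the completing-the-square step to be the main obstacle, since it relies on the identity $\tfrac14\vert\alpha-\beta\vert^{2}=\tfrac14\vert\alpha+\beta\vert^{2}-\operatorname{Re}(\alpha\bar\beta)$ and on careful bookkeeping of the real parts and conjugates in the complex case; the equivalence with the hypotheses stated in the theorem is the same computation read backwards, so I would mention it only briefly. Finally, to confirm that $\tfrac14$ is best possible I would exhibit an equality case, for instance in $H=\mathbb{R}^{2}$ with $e=(1,0)$, $x=y=(0,1)$, $\alpha=\lambda=1$ and $\beta=\mu=-1$: both hypotheses then hold with equality, the orthogonal components coincide so Cauchy--Schwarz is tight, and both sides of \eqref{1.1} equal $1$.
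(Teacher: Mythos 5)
Your proof is correct; note that the paper does not actually prove Theorem~\ref{t1} but only quotes it from \cite{dra1}, and your argument --- rewriting $\langle x,y\rangle-\langle x,e\rangle\langle e,y\rangle$ as the inner product of the components of $x$ and $y$ orthogonal to $e$, applying Cauchy--Schwarz, and bounding $\Vert x\Vert^{2}-\vert\langle x,e\rangle\vert^{2}$ by $\frac{1}{4}\vert\alpha-\beta\vert^{2}$ via the completed square $\left\vert\frac{\alpha+\beta}{2}-\langle x,e\rangle\right\vert^{2}\geq 0$ --- is precisely the standard argument behind that reference. Your sharpness example also checks out, since both hypotheses hold with equality and both sides of (\ref{1.1}) equal $1$.
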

Let $\langle H;\langle .,.\rangle\rangle$ be a real or complex Hilbert space, $\Omega\subset \mathbb{R}^{n}$ be a Lebesgue measurable set and $\rho:\Omega\longrightarrow [0,\infty)$ a Lebesgue measurable function with $\int_{\Omega}\rho(s)ds=1$. We denote by $L_{2,\rho}(\Omega ,H)$ the set of all strongly measurable functions $f$ on $\Omega$ such that $\| f\|_{2,\rho}^{2}:=\int_{\Omega}\rho(s)\| f(s)\|^{2}ds<\infty$.

A further extension of Gr\"{u}ss type inequality for Bochner integrals of vector-valued functions
in real or complex Hilbert spaces is given in \cite{bus}.

\begin{theorem}\label{t2}
Let $\langle H;\langle .,.\rangle\rangle$ be a real or complex Hilbert space, $\Omega\subset \mathbb{R}^{n}$ be a Lebesgue measurable set and $\rho:\Omega\longrightarrow [0,\infty)$ a Lebesgue measurable function with $\int_{\Omega}\rho(s)ds=1$. If $f,g$ belong to $L_{2,\rho}(\Omega,H)$ and there exist the vectors $x, X, y, Y\in H$ such that
\begin{align}\label{1.2}
&\int_{\Omega}\rho(t) Re\langle X-f(t),f(t)-x\rangle dt\geq 0,\\
&\int_{\Omega}\rho(t)Re\langle Y-g(t),g(t)-y\rangle dt\geq 0,\notag
\end{align}
or, equivalently,
\begin{align}\label{1.3}
\int_{\Omega}\rho(t)\left\Vert f(t)-\dfrac{X+x}{2}\right\Vert^{2}dt&\leq \dfrac{1}{4}\Vert X-x\Vert^{2},\\
\int_{\Omega}\rho(t)\left\Vert g(t)-\dfrac{Y+y}{2}\right\Vert^{2}dt&\leq \dfrac{1}{4}\Vert Y-y\Vert^{2}.\notag
\end{align}
Then the following inequalities hold
\begin{align}\label{1.4}
&\left\vert\int_{\Omega}\rho(t)\langle f(t),g(t)\rangle dt -\left\langle\int_{\Omega}\rho(t)f(t)dt,\int_{\Omega}\rho(t)g(t)dt\right\rangle\right\vert\notag\\
&\leq \dfrac{1}{4}\Vert X-x\Vert\Vert Y-y\Vert-\left[\int_{\Omega}\rho(t)Re\langle X-f(t),f(t)-x\rangle dt\right.\\
&\left.\times\int_{\Omega}\rho(t)Re\langle Y-g(t),g(t)-y\rangle dt\right]^{\frac{1}{2}}\notag\\
&\leq \dfrac{1}{4}\Vert X-x\Vert\Vert Y-y\Vert.\notag
\end{align}
The constant $\frac{1}{4}$
is sharp in the sense mentioned above.
\end{theorem}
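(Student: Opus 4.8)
The plan is to regard the left-hand side of (\ref{1.4}) as a nonnegative Hermitian sesquilinear form and run a Cauchy--Schwarz argument, then to control each of the two resulting ``variances'' by the hypotheses (\ref{1.2}). Set $\bar f=\int_{\Omega}\rho(t)f(t)\,dt$, $\bar g=\int_{\Omega}\rho(t)g(t)\,dt$ and define the covariance functional
\[
D(f,g)=\int_{\Omega}\rho(t)\langle f(t),g(t)\rangle\,dt-\langle\bar f,\bar g\rangle .
\]
I would first check that $D$ is a nonnegative Hermitian form: $D(g,f)=\overline{D(f,g)}$, and $D(f,f)=\int_{\Omega}\rho\|f\|^{2}-\|\bar f\|^{2}\ge 0$ by Jensen's inequality applied to $\|\cdot\|^{2}$ (equivalently the weighted Cauchy--Schwarz estimate $\|\int_{\Omega}\rho f\|^{2}\le\int_{\Omega}\rho\|f\|^{2}$, using that the Bochner integral commutes with $\langle\cdot,\cdot\rangle$). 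The generalized Cauchy--Schwarz inequality for nonnegative Hermitian forms then gives the pre-Gr\"uss bound $|D(f,g)|\le D(f,f)^{1/2}D(g,g)^{1/2}$.

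Second, I would bound each variance. From the elementary identity $\int_{\Omega}\rho\|f-c\|^{2}=D(f,f)+\|\bar f-c\|^{2}$, valid for every fixed $c\in H$, the minimum over $c$ is attained at $c=\bar f$; choosing instead $c=\tfrac{X+x}{2}$ yields
\[
D(f,f)\le\int_{\Omega}\rho(t)\Big\|f(t)-\tfrac{X+x}{2}\Big\|^{2}dt=\tfrac14\|X-x\|^{2}-\int_{\Omega}\rho(t)\,Re\langle X-f(t),f(t)-x\rangle\,dt,
\]
where the last equality is the pointwise Gr\"uss identity $\tfrac14\|X-x\|^{2}-Re\langle X-u,u-x\rangle=\|u-\tfrac{X+x}{2}\|^{2}$ integrated against $\rho$ (this also establishes the equivalence of (\ref{1.2}) and (\ref{1.3})). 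The analogous bound holds for $g$.

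Writing $A=\tfrac14\|X-x\|^{2}$, $B=\tfrac14\|Y-y\|^{2}$ and letting $P,Q\ge 0$ denote the two integrals in (\ref{1.2}), I obtain $D(f,f)\le A-P$ and $D(g,g)\le B-Q$ with $0\le P\le A$, $0\le Q\le B$. Combining these with the pre-Gr\"uss bound gives $|D(f,g)|\le\sqrt{(A-P)(B-Q)}$, and the first inequality in (\ref{1.4}) then follows from the Landau-type estimate $\sqrt{(A-P)(B-Q)}\le\sqrt{AB}-\sqrt{PQ}$, which after squaring reduces to the AM--GM inequality $AQ+PB\ge 2\sqrt{AQ\cdot PB}$. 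Discarding the nonnegative term $\sqrt{PQ}$ yields the second inequality.

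I expect the main obstacle to be the careful justification of the Cauchy--Schwarz step for the complex sesquilinear form $D$, namely verifying that the real-part conditions in (\ref{1.2}) feed correctly into a genuinely nonnegative form and handling the measurability and integrability so that $\langle\cdot,\cdot\rangle$ may legitimately be exchanged with the Bochner integral; by comparison the closing Landau/AM--GM step is routine. Finally, sharpness of the constant $\tfrac14$ should follow by specializing to one-dimensional $H$ with constant weight, which recovers the scalar situation of \thmref{t1} and hence the classical Gr\"uss inequality.
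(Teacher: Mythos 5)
Your argument is correct. Note, however, that the paper does not actually prove Theorem \ref{t2}: it is quoted from \cite{bus} and used only as motivation, the paper's own proof being reserved for the module-valued generalization, Theorem \ref{t3}. Your proof follows exactly the skeleton that the paper employs there: introduce the covariance form $D(f,g)=[f,g]$, check it is a nonnegative Hermitian (semi-inner product) form so that Cauchy--Schwarz gives $|D(f,g)|\le D(f,f)^{1/2}D(g,g)^{1/2}$, and then bound each variance by $\int_\Omega\rho\|f-\tfrac{X+x}{2}\|^2$ via the translation-invariance identity and the pointwise Gr\"uss identity. All of these steps are verified correctly (the identity $\int_\Omega\rho\|f-c\|^2=D(f,f)+\|\bar f-c\|^2$ and the reduction of $\sqrt{(A-P)(B-Q)}\le\sqrt{AB}-\sqrt{PQ}$ to AM--GM both check out). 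The one genuine addition in your write-up relative to the paper's Theorem \ref{t3} is precisely this last Landau/AM--GM step, which produces the refined middle term $\tfrac14\|X-x\|\,\|Y-y\|-\sqrt{PQ}$ of (\ref{1.4}); the paper cannot perform the analogous step in (\ref{3.11}) because there the ``defects'' are positive elements of a $C^*$-algebra rather than scalars, and only their norms are compared. Your sharpness remark is also consistent with the paper, which exhibits the extremal example $f=g=\mathrm{sgn}(t-\tfrac12)$ on $[0,1]$ in the proof of Theorem \ref{t3}; spelling that example out would make the sharpness claim self-contained rather than an appeal to the classical scalar case.
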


The Gr\"uss inequality has been investigated in inner product modules over $H^*$-algebras and $C^*$-algebras
\cite{ban,gha1}, completely bounded maps \cite{pec}, $n$-positive linear maps \cite{mos} and
semi-inner product $C\sp *$-modules \cite{fuj}.

Also Joci\'c et.al. in \cite{joc} presented the following Gr\"{u}ss type inequality
\begin{equation*}
\left|\left|\left|\int_\Omega \mathcal{A}_tX\mathcal{B}_td\mu(t)-\int_\Omega \mathcal{A}_td\mu(t)X\int_\Omega \mathcal{B}_td\mu(t)\right|\right|\right|\leq \frac{\|D-C\|.\|F-E\|}{4}|||X|||
\end{equation*}
for all bounded self-adjoint fields satisfying $C\leq \mathcal{A}_t \leq D$ and $E \leq \mathcal{B}_t \leq F$ for all $t\in \Omega$ and some bounded self-adjoint operators $C,D,E$ and $F$, and for all $X\in\mathcal{C}_{|||.|||}(H)$.

The main aim of this paper is to obtain a generalization of Theorem \ref{t2} for vector-value functions in Hilbert $C^*$-modules.
Some applications for such functions are also given.
\section{Preliminaries}

Hilbert $C^*$-modules are used as the framework for Kasparov's bivariant K-theory and form the
technical underpinning for the $C^*$-algebraic approach to quantum groups. Hilbert $C^*$-modules
are very useful in the following research areas: operator K-theory, index theory for operator valued
conditional expectations, group representation theory, the theory of $AW^*$-algebras, noncommutative
geometry, and others. Hilbert $C^*$-modules form a category in between Banach
spaces and Hilbert spaces and obey the same axioms as a Hilbert space except that the inner
product takes values in a general $C^*$-algebra than the complex number $\mathbb{C}$. This simple generalization
gives a lot of trouble. Fundamental and familiar Hilbert space properties like Pythagoras' equality, self-duality and
decomposition into orthogonal complements must be given up. Moreover,
a bounded module map between Hilbert $C^*$-modules need not have an adjoint; not every
adjointable operator need have a polar decomposition. Hence to get its applications, we have to
use it with great care.

Let $\mathcal{A}$ be a $C^*$-algebra. A semi-inner product module
over $\mathcal{A}$ is a right module $X$ over $\mathcal{A}$ together with a generalized semi-inner product,
that is with a mapping $\langle.,.\rangle$ on $X\times X$, which is $\mathcal{A}$-valued and having the following properties:
\begin{enumerate}
\item[(i)]
 $\langle x,y+z\rangle=\langle x,y\rangle+\langle x,z\rangle$ for all $x,y,z\in X,$
\item[(ii)]
$\left\langle x,ya\right\rangle=\left\langle x,y\right\rangle a$ for $x,y\in X, a\in \mathcal{A}$,
\item[(iii)]
$\langle x,y\rangle ^*=\langle y,x\rangle$ for all $x,y\in X$,
\item[(iv)]
$\left\langle x,x\right\rangle \geq 0$ for $x\in X$.
\end{enumerate}
We will say that $X$ is a semi-inner product $C^*$-module.
The absolute value of $x\in X$ is defined as the square root of $\langle x,x\rangle$ and it is denoted
by $|x|$.
If, in addition,
\begin{enumerate}
\item[(v)]
$\langle x,x\rangle=0$ implies $x=0$,
\end{enumerate}
then $\langle .,.\rangle$ is called a generalized inner
product and $X$ is called an inner product module over $\mathcal{A}$ or an inner product $C^*$-module.
An Inner product $C^*$-module which is complete with respect to the norm $\|x\|:=\|\langle x,x\rangle\|^\frac{1}{2}\quad (x\in X)$
is called a Hilbert $C^*$-module.

As we can see, an inner product module obeys the same axioms as an ordinary
inner product space, except that the inner product takes values in a more general
structure than in the field of complex numbers.

If $\mathcal{A}$ is a $C^*$-algebra and $X$ is a semi-inner product $\mathcal{A}$-module, then the following Schwarz
inequality holds:
\begin{equation}\label{2.1}
\langle x,y\rangle\langle y,x\rangle\leq \|\langle x,x\rangle\|~\langle y,y\rangle~~(x,y\in X).
\end {equation}
(e.g. \cite[Proposition 1.1]{lan}).

It follows from the Schwarz inequality (\ref{2.1}) that $\|x\|$ is a semi-norm on $X$.

Now let $\mathcal{A}$ be a $\ast $-algebra, $\varphi $ a positive linear functional on $%
\mathcal{A}$ and $X$ be a semi-inner $\mathcal{A}$-module. We can define a
sesquilinear form on $X\times X$ by $\sigma (x,y)=\varphi \left(
\left\langle x,y\right\rangle \right) $; the Schwarz inequality for $\sigma $
implies that
\begin{equation}
\vert \varphi\langle x,y\rangle\vert^{2}\leq \varphi\langle x,x\rangle \varphi\langle y,y\rangle.  \label{2.4}
\end{equation}%
In \cite[Proposition 1, Remark 1]{gha} the authors present two other forms of the Schwarz inequality in
semi-inner $\mathcal{A}$-module $X$, one for
positive linear functional $\varphi$ on $\mathcal{A}$:
\begin{equation}
\varphi(\langle x,y\rangle\langle y,x\rangle)
\leq \varphi \langle x,x\rangle r\langle y,y\rangle,  \label{2.5}
\end{equation}%
where $r$ is spectral radius, another one for $C^*$-seminorm $\gamma$ on $\mathcal{A}$:
\begin{equation}
(\gamma\langle x,y\rangle) ^{2}\leq \gamma\langle x,x\rangle \gamma \langle y,y\rangle.  \label{2.6}
\end{equation}%


\section{The main results}
Let $\mathcal{A}$ be a $C^*$-algebra, first we state some basic properties of integrals of $\mathcal{A}$-value functions with respect to a positive measure for Bochner integrability of functions which we need to use them in our discussion. For basic properties of integrals of vector value functions with respect to scalar measures and integrals of scalar value functions with respect to vector measures see chapter II in \cite{die}.
 \begin{lemma}
Let $\mathcal{A}$ be a $C^*$-algebra, $X$ a Hilbert $C^*$-module and $(\Omega,\mathfrak{M},\mu)$ be a measure space.
If $f: \Omega\rightarrow X$ is Bochner integrable and $a\in \mathcal{A}$ then
\begin{enumerate}
\item[(a)] $fa$ is Bochner integrable, where $fa(t)=f(t)a~~(t\in\Omega)$,
\item[(b)] the function $f^*:\Omega \rightarrow \mathcal{A}$ defined by $f^*(t)=(f(t))^*$ is Bochner integrable and $\int_\Omega f^*d\mu=\left(\int_\Omega f~d\mu\right)^*$.\\
    Furtheremore,
\item[(c)] If $\mu(\Omega)<\infty$ and $f$ is positive, i.e., $f(t)\geq 0$ for all $t\in \Omega$, then\\ $\int_\Omega f(t) d\mu(t)\geq0.$
\end{enumerate}
\end{lemma}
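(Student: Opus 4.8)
The plan is to reduce all three parts to two standard facts about the Bochner integral: first, that a strongly measurable function $h$ into a Banach space is Bochner integrable precisely when $\int_\Omega \|h\|\,d\mu<\infty$; and second, that a bounded linear map $T$ between Banach spaces commutes with the integral, $T\int_\Omega h\,d\mu=\int_\Omega Th\,d\mu$. Each assertion then follows by exhibiting the relevant operation as such a map and verifying a norm estimate. In parts (b) and (c) the function is understood to be $\mathcal{A}$-valued, so that $(f(t))^*$ and $f(t)\ge 0$ are meaningful.

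For (a), I would first record that right multiplication $R_a\colon X\to X$, $R_a(x)=xa$, is bounded and linear: from the module identity $\|xa\|^2=\|\langle xa,xa\rangle\|=\|a^*\langle x,x\rangle a\|\le\|a\|^2\|\langle x,x\rangle\|=\|a\|^2\|x\|^2$ one gets $\|R_a\|\le\|a\|$. Hence $fa=R_a\circ f$ is strongly measurable, and $\int_\Omega\|fa\|\,d\mu\le\|a\|\int_\Omega\|f\|\,d\mu<\infty$, so $fa$ is Bochner integrable; the identity $\int_\Omega fa\,d\mu=\bigl(\int_\Omega f\,d\mu\bigr)a$ is then just the statement that $R_a$ passes under the integral.

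For (b), the involution $*\colon\mathcal{A}\to\mathcal{A}$ is an isometry, $\|a^*\|=\|a\|$, so $f^*=*\circ f$ is strongly measurable with $\int_\Omega\|f^*\|\,d\mu=\int_\Omega\|f\|\,d\mu<\infty$, giving Bochner integrability. The identity $\int_\Omega f^*\,d\mu=(\int_\Omega f\,d\mu)^*$ is the only place the machinery needs care, since $*$ is conjugate-linear rather than linear and the ``bounded linear map'' theorem does not apply verbatim. I would instead pick simple functions $f_n=\sum_i x_i\chi_{E_i}$ with $\int_\Omega\|f-f_n\|\,d\mu\to0$; for these $\int_\Omega f_n^*\,d\mu=\sum_i x_i^*\mu(E_i)=\bigl(\int_\Omega f_n\,d\mu\bigr)^*$ holds by direct computation, and letting $n\to\infty$ the isometry of $*$ together with continuity of the integral transfers the identity to $f$. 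This approximation step is the main obstacle.

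For (c) I would use the dual description of the positive cone. Every positive linear functional $\varphi$ on $\mathcal{A}$ is bounded, hence commutes with the integral, so $\varphi\bigl(\int_\Omega f\,d\mu\bigr)=\int_\Omega\varphi(f(t))\,d\mu(t)\ge0$, the integrand being nonnegative because $f(t)\ge0$ and $\varphi$ is positive. Moreover each $f(t)$ is self-adjoint, so by part (b) $(\int_\Omega f\,d\mu)^*=\int_\Omega f^*\,d\mu=\int_\Omega f\,d\mu$, i.e. $\int_\Omega f\,d\mu$ is self-adjoint. Since a self-adjoint element on which every positive linear functional (equivalently, every state) is nonnegative is itself positive, this yields $\int_\Omega f\,d\mu\ge0$. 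The hypothesis $\mu(\Omega)<\infty$ ensures that the approximating simple functions can be taken supported on sets of finite measure, so no integrability difficulty arises.
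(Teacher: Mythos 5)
Your proposal is correct, and for parts (a) and (b) it is essentially the paper's argument: verify the identity on simple functions (for (a) you package this as the statement that right multiplication $R_a$ is a bounded module map commuting with the integral, using $\|xa\|\le\|a\|\,\|x\|$) and pass to the limit, noting explicitly that the involution is only conjugate-linear and isometric --- precisely the care hidden in the paper's terse ``the result therefore follows.'' Part (c) is where you genuinely diverge. The paper argues internally to $\mathcal{A}$: it shows $f^{1/2}$ is Bochner integrable (strong measurability via continuity of the square root, integrability via H\"older, which is the only place $\mu(\Omega)<\infty$ is used), approximates $f^{1/2}$ by simple functions $\varphi_n$, obtains positive simple functions $\psi_n=\varphi_n^*\varphi_n$ converging to $f$ with $\int_\Omega\psi_n\,d\mu\to\int_\Omega f\,d\mu$, and concludes because the positive cone is a norm-closed convex set. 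You instead dualize: $\varphi\bigl(\int_\Omega f\,d\mu\bigr)=\int_\Omega\varphi(f(t))\,d\mu\ge0$ for every positive linear functional $\varphi$, the integral is self-adjoint by (b), and a self-adjoint element on which every state is nonnegative is positive. Your route is shorter, sidesteps the measurability of $f^{1/2}$ and the convergence of $\int_\Omega\varphi_n^*\varphi_n\,d\mu$ (which the paper passes over rather quickly), and in fact never uses $\mu(\Omega)<\infty$, so it proves the conclusion over an arbitrary measure space; the price is invoking the state-space characterization of positivity, where the paper needs only the elementary fact that the positive cone is closed.
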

\begin{proof} (a) Suppose $\varphi: \Omega\rightarrow X$ is a simple function with finite support i.e., $ \varphi=\sum_{i=1}^n \chi_{E_i} x_i $ with $\mu(E_i)<\infty$ for each non-zero $x_i\in X$ $(i=1,2,3,...,n)$, then for every $a\in \mathcal{A}$ the function $\varphi a :\Omega\rightarrow X$ defined by $\varphi a(t)=\varphi(t)a~~~(t\in\Omega)$ is a simple function and $\int_\Omega \varphi a~ d\mu=\left(\int_\Omega \varphi d\mu\right)a$.
 Since $f: \Omega\rightarrow X$ is Bochner integrable and  $\|f(t)a\|\leq\|f(t)\|~\|a\|$ thus $fa$ is strongly measurable and $\int_\Omega fa~ d\mu=\left(\int_\Omega f d\mu\right)a$, therefore $fa$ is Bochner integrable.

\item[(b)] For every simple function
$ \varphi=\sum_{i=1}^n \chi_{E_i} a_i $ we have $ \varphi^*=\sum_{i=1}^n \chi_{E_i} a_i^* $ consequently $\int_\Omega \varphi^*d\mu=\left(\int_\Omega \varphi~d\mu\right)^*$. The result therefore follows.

\item[(c)] Suppose that $\mu(\Omega)<\infty$ and $f$ is a Bochner integrable function and positive, i.e., $f(t)\geq 0$ for all $t\in \Omega$.
Since $f$ is Bochner integrable $\int_\Omega \|f(t)\| d\mu(t)<\infty$ by Theorem 2 in \cite[chapter II, section 2]{die}. Using Holder inequality for Lebesgue integrable functions we get
\[
\int_\Omega \|f^{\frac{1}{2}}(t)\| d\mu(t)=\int_\Omega \|f(t)\|^\frac{1}{2} d\mu(t)\leq \left(\int_\Omega \|f(t)\| d\mu(t)\right)^\frac{1}{2}\mu(\Omega)^\frac{1}{2}<\infty.
\]
So $f^\frac{1}{2}$ is Bochner integrable thus there is a sequence of simple functions $\varphi_n$ such that $\varphi_n(t)\rightarrow f^\frac{1}{2}(t)$ for almost all $t\in\Omega$ and
$\int_\Omega\varphi_n(t)d\mu(t)\rightarrow \int_\Omega f^\frac{1}{2}(t)d\mu(t)$ in norm topology in $\mathcal{A}$.

This implies that $\varphi_n(t)^*\varphi_n(t)\rightarrow f(t)$, i.e., for every positive Bochner integrable function $f$ there is a sequence of positive simple functions $\psi_n$ such that $\psi_n(t)\rightarrow f(t)$ for almost all $t\in\Omega$ and
$\int_\Omega\psi_n(t)d\mu(t)\rightarrow \int_\Omega f(t)d\mu(t)$ in norm topology in $\mathcal{A}$.
By proposition (1.6.1) in \cite{dix} the set of positive elements in a $C^*$-algebra is a closed convex cone, therefore $\int_\Omega f(t) d\mu(t)\geq0$ since $\int_\Omega \psi_n(t) d\mu(t)\geq0$.
\end{proof}

If $\mu$ is a probability measure on $\Omega$. We denote by $L_{2}(\Omega ,X)$ the set of all strongly measurable functions $f$ on $\Omega$ such that $\| f\|_{2}^{2}:=\int_{\Omega}\| f(s)\|^{2}d\mu(s)<\infty$.

For every $a\in X$, we define the constant function $e_a\in L_{2}(\Omega,X)$ by $e_a(t)=a ~~(t\in \Omega)$.
In the following lemma we show that a special kind of invariant property holds, which we will use in the sequel.

\begin{lemma}\label{l1}
If $f,g\in L_{2}(\Omega,X)$, $a,b\in X$ and $e_a,e_b$ are measurable then
\begin{multline}\label{3.1}
\int_{\Omega}\langle f(t)-e_a(t),g(t)-e_b(t)\rangle d\mu(t)-\left\langle\int_{\Omega}(f(t)-e_a(t))d\mu(t),\int_{\Omega}(g(t)-e_b(t))d\mu(t)\right\rangle\\
=\int_{\Omega}\langle f(t),g(t)\rangle d\mu(t)-\left\langle\int_{\Omega}f(t)d\mu(t),\int_{\Omega}g(t)d\mu(t)\right\rangle.
\end{multline}
In particular,
\begin{equation}\label{3.2}
\int_{\Omega}| f(t)|^{2}d\mu(t)-\left|\int_{\Omega}f(t)d\mu(t)\right|^{2}\leq\int_{\Omega}| f(t)-e_a(t)|^2 d\mu(t).
\end{equation}
\end{lemma}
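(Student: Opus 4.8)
The plan is to prove the algebraic identity \eqref{3.1} by a direct expansion of the inner products, and then to obtain the inequality \eqref{3.2} as a special case in which one manifestly nonnegative term is discarded. Throughout I use that $\mu$ is a probability measure, so that $\int_\Omega e_a\,d\mu = a$ and hence $\int_\Omega (f-e_a)\,d\mu = \int_\Omega f\,d\mu - a$, and likewise for $g$ and $b$. I also record at the outset that, since $\mu$ is finite, $L_2(\Omega,X)\subset L_1(\Omega,X)$, so $f,g$ are Bochner integrable, and that $\langle f(\cdot),g(\cdot)\rangle$ is Bochner integrable in $\mathcal{A}$ by the Schwarz inequality \eqref{2.1} together with the scalar Cauchy--Schwarz inequality; thus every integral below is meaningful.

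First I would expand the integrand on the left of \eqref{3.1}. The additivity of $\langle\cdot,\cdot\rangle$ in the second slot is axiom (i), and additivity in the first slot follows from (i) and (iii); combining these gives
\[
\langle f(t)-a,\,g(t)-b\rangle = \langle f(t),g(t)\rangle - \langle f(t),b\rangle - \langle a,g(t)\rangle + \langle a,b\rangle .
\]
Integrating term by term and using $\mu(\Omega)=1$ to evaluate the constant term $\int_\Omega\langle a,b\rangle\,d\mu=\langle a,b\rangle$ reduces the left-hand integral to $\int_\Omega\langle f,g\rangle\,d\mu$, two mixed integrals, and $\langle a,b\rangle$.

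The one genuinely analytic step, and the place where I expect the only real care to be needed, is to commute the inner product with the Bochner integral, that is, to justify
\[
\int_\Omega\langle f(t),b\rangle\,d\mu(t)=\Big\langle\int_\Omega f\,d\mu,\,b\Big\rangle,\qquad \int_\Omega\langle a,g(t)\rangle\,d\mu(t)=\Big\langle a,\int_\Omega g\,d\mu\Big\rangle .
\]
The maps $x\mapsto\langle x,b\rangle$ and $y\mapsto\langle a,y\rangle$ are bounded from $X$ into $\mathcal{A}$, since the Schwarz inequality \eqref{2.1} yields $\|\langle x,b\rangle\|\le\|x\|\,\|b\|$, and each is additive and real-homogeneous (the first is in fact conjugate-linear, the second linear). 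A bounded real-linear map commutes with the Bochner integral against the positive measure $\mu$---the standard fact that continuous operators pass under the Bochner integral---and conjugate-linearity causes no difficulty precisely because $\mu$ is real-valued. This is exactly the identities displayed above.

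With these in hand, expanding the second term $\langle\int_\Omega (f-e_a)\,d\mu,\int_\Omega (g-e_b)\,d\mu\rangle=\langle\int f\,d\mu - a,\int g\,d\mu - b\rangle$ in the same fashion produces the identical mixed terms $-\langle\int f\,d\mu,b\rangle-\langle a,\int g\,d\mu\rangle+\langle a,b\rangle$. Subtracting, all cross terms cancel and only $\int_\Omega\langle f,g\rangle\,d\mu-\langle\int f\,d\mu,\int g\,d\mu\rangle$ survives, which is \eqref{3.1}. Finally, to obtain \eqref{3.2} I would set $g=f$ and $b=a$; recalling $\langle x,x\rangle=|x|^2$, the identity \eqref{3.1} becomes
\[
\int_\Omega|f-e_a|^2\,d\mu-\Big|\int_\Omega(f-e_a)\,d\mu\Big|^2=\int_\Omega|f|^2\,d\mu-\Big|\int_\Omega f\,d\mu\Big|^2 .
\]
Since $|\int_\Omega(f-e_a)\,d\mu|^2\ge 0$ as a positive element of $\mathcal{A}$, discarding it from the left-hand side gives the desired inequality \eqref{3.2}.
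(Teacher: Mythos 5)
Your proposal is correct and follows essentially the same route as the paper: expand the integrand via the module axioms, pull the inner product through the Bochner integral in the two mixed terms, cancel against the expansion of $\left\langle\int_{\Omega}f\,d\mu-a,\int_{\Omega}g\,d\mu-b\right\rangle$, and obtain \eqref{3.2} by setting $g=f$, $b=a$ and discarding the positive element $\left|\int_{\Omega}(f-e_a)\,d\mu\right|^{2}$. Your justification of the interchange via bounded (conjugate-)linear maps commuting with the Bochner integral is, if anything, a cleaner version of the paper's appeal to functionals $\Lambda\in X^*$.
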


\begin{proof}
We must state that the functions under the integrals
(\ref{3.1}) are Bochner integrable on $\Omega$ since they are strongly
measurable and we can state the following obvious results:

For every $\Lambda\in X^*$ we have $\Lambda\left(\int_{\Omega}f(t)d\mu(t)\right)=\int_{\Omega}\Lambda(f(t))d\mu(t)$.
Therefore
\begin{align*}
\int_{\Omega}\langle f(t),b\rangle d\mu(t)&=\left\langle\int_{\Omega}f(t)d\mu(t),b\right\rangle,\\
\int_{\Omega}\langle a,g(t)\rangle d\mu(t)&=\left\langle a, \int_{\Omega}g(t)d\mu(t)\right\rangle.
\end{align*}

Also for almost all $t\in\Omega$ we have
\begin{equation*}
\int_{\Omega}\Vert f(t)\Vert d\mu(t)
\leq\Big(\mu(\Omega)\Big)^\frac{1}{2}\left(\int_{\Omega}\Vert f(t)\Vert^2d\mu(t)\right)^{\frac{1}{2}}=\| f\|_{2},
\end{equation*}
\begin{equation*}
\int_{\Omega}\Vert g(t)\Vert d\mu(t)
\leq\Big(\mu(\Omega)\Big)^\frac{1}{2}\left(\int_{\Omega}\Vert g(t)\Vert^2d\mu(t)\right)^{\frac{1}{2}}=\| g\|_{2},
\end{equation*}
and
\begin{equation*}
\int_{\Omega}\Vert \langle f(t),g(t)\rangle\Vert d\mu(t)\leq\| f\|_{2}\|g\|_2.
\end{equation*}

A simple calculation shows that
\begin{multline*}
\int_{\Omega}\langle f(t)-e_a(t),g(t)-e_b(t)\rangle d\mu(t)-\left\langle\int_{\Omega}(f(t)-e_a(t))d\mu(t),\int_{\Omega}(g(t)-e_b(t))d\mu(t)\right\rangle\\
=\int_{\Omega}\Big(\langle f(t), g(t)\rangle -\langle f(t), b\rangle-\langle a, g(t)\rangle+\langle a,b\rangle\Big)d\mu(t)\\
-\left\langle\int_{\Omega}f(t)d\mu(t)-a,\int_{\Omega}g(t)d\mu(t)-b\right\rangle\\
=\int_{\Omega}\langle f(t),g(t)\rangle d\mu(t) -\left\langle\int_{\Omega}f(t)d\mu(t),\int_{\Omega}g(t)d\mu(t)\right\rangle,
\end{multline*}
and for $f=g$ and $a=b$ we deduce (\ref{3.2}).
\end{proof}

The following result concerning a generalized semi-inner product on $L_{2}(\Omega,X)$ may be stated:
\begin{lemma}\label{l2}
If $f,g\in L_{2}(\Omega,X)$, $x, x', y, y'\in X$, then
\begin{enumerate}
\item[(i)] the following inequalities (\ref{3.3}) and (\ref{3.4}) are equivalent
\begin{align}\label{3.3}
&\int_{\Omega} Re\langle x'-f(t),f(t)-x\rangle d\mu(t)\geq 0,\\
&\int_{\Omega}Re\langle y'-g(t),g(t)-y\rangle d\mu(t)\geq 0,\notag
\end{align}

\begin{align}\label{3.4}
\int_{\Omega}\left| f(t)-\dfrac{x'+x}{2}\right|^{2}d\mu(t)&\leq \dfrac{1}{4}| x'-x|^{2},\\
\int_{\Omega}\left| g(t)-\dfrac{y'+y}{2}\right|^{2}d\mu(t)&\leq \dfrac{1}{4}| y'-y|^{2}.\notag
\end{align}
\item[(ii)] The map $[f,g]: L_{2}(\Omega,X)\times L_{2}(\Omega,X)\rightarrow \mathcal{A},$
\begin{equation}\label{3.5}
[f,g]:=\int_{\Omega}\langle f(t),g(t)\rangle d\mu(t) -\left\langle\int_{\Omega}f(t)d\mu(t),\int_{\Omega}g(t)d\mu(t)\right\rangle,
\end{equation}
is a generalized semi-inner product on $L_{2}(\Omega,X)$.
\end{enumerate}
\end{lemma}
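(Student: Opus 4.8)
The statement has two parts, and I would prove each by reducing it to a pointwise identity in $X$ together with the integration lemmas already at hand.

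For part (i), the plan is to first record the pointwise identity, valid in any semi-inner $\mathcal{A}$-module with $Re(c)=\frac12(c+c^*)$,
\begin{equation*}
Re\langle x'-f(t),f(t)-x\rangle=\frac14|x'-x|^2-\left|f(t)-\frac{x'+x}{2}\right|^2 .
\end{equation*}
This is checked by expanding both sides using only axioms (i)--(iii) of the $\mathcal{A}$-valued inner product; it is a genuine equality in $\mathcal{A}$ and no positivity enters. Integrating against $d\mu$ and using that $\mu$ is a probability measure, so that the constant $\frac14|x'-x|^2$ integrates to itself, gives
\begin{equation*}
\int_\Omega Re\langle x'-f(t),f(t)-x\rangle\, d\mu(t)=\frac14|x'-x|^2-\int_\Omega\left|f(t)-\frac{x'+x}{2}\right|^2 d\mu(t).
\end{equation*}
The left-hand side is positive exactly when the right-hand side is, which is precisely the equivalence of the first inequalities in (\ref{3.3}) and (\ref{3.4}); applying the same identity to $g,y',y$ handles the second pair. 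I expect this part to be routine once the identity is in place.

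For part (ii), I would first note that $L_2(\Omega,X)$ is a right $\mathcal{A}$-module under $(fa)(t)=f(t)a$, which stays in $L_2(\Omega,X)$ because $\|f(t)a\|\leq\|f(t)\|\,\|a\|$, and that $[f,g]$ genuinely takes values in $\mathcal{A}$, both integrals in (\ref{3.5}) existing in $\mathcal{A}$ as in the proof of \lemref{l1}. Then I would verify the four axioms of a generalized semi-inner product. Additivity $[f,g+h]=[f,g]+[f,h]$ follows from additivity of $\langle\cdot,\cdot\rangle$ in the second variable and linearity of the integral. Homogeneity $[f,ga]=[f,g]a$ uses $\langle f(t),g(t)a\rangle=\langle f(t),g(t)\rangle a$ together with part (a) of the first lemma, which pulls $a$ out of $\int_\Omega g(t)a\, d\mu$, and the fact that right multiplication by $a$ commutes with the integral. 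Conjugate symmetry $[f,g]^*=[g,f]$ is part (b) of the first lemma applied to $t\mapsto\langle f(t),g(t)\rangle$, combined with $\langle u,v\rangle^*=\langle v,u\rangle$ in $X$.

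The one genuinely nontrivial axiom is positivity $[f,f]\geq 0$, and here I expect the main obstacle, since $[f,f]=\int_\Omega|f(t)|^2 d\mu-|\int_\Omega f\, d\mu|^2$ is a difference of positive elements of $\mathcal{A}$ and is not visibly positive. The device is the invariance identity (\ref{3.1}), specialized to $f=g$ and $a=b=\int_\Omega f\, d\mu$. With this choice the centered integral $\int_\Omega(f(t)-e_a(t))\,d\mu(t)=\int_\Omega f\, d\mu-a$ vanishes, again because $\mu$ is a probability measure and $\int_\Omega e_a\, d\mu=a$, so (\ref{3.1}) collapses to
\begin{equation*}
[f,f]=\int_\Omega\left|f(t)-\int_\Omega f\, d\mu\right|^2 d\mu(t).
\end{equation*}
The integrand is a pointwise-positive $\mathcal{A}$-valued function and $\mu(\Omega)=1<\infty$, so part (c) of the first lemma yields $[f,f]\geq 0$ at once. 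This completes the verification that $[\cdot,\cdot]$ is a generalized semi-inner product on $L_2(\Omega,X)$.
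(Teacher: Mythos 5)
Your proof is correct. Part (i) coincides with the paper's argument: the same pointwise identity $Re\langle x'-y,y-x\rangle=\frac14|x'-x|^2-\bigl|y-\frac{x'+x}{2}\bigr|^2$ is integrated against the probability measure to produce the equality (\ref{3.6}), from which the equivalence of (\ref{3.3}) and (\ref{3.4}) is immediate. In part (ii) you diverge on the one nontrivial point, positivity of $[f,f]$. The paper establishes it through the Korkine-type double-integral identity (\ref{3.7}), which for $f=g$ gives $[f,f]=\frac12\int_\Omega\int_\Omega|f(t)-f(s)|^2\,d\mu(t)\,d\mu(s)\ge 0$ as in (\ref{3.8}); you instead specialize the invariance identity (\ref{3.1}) to $a=b=\int_\Omega f\,d\mu$, so that the centered integral vanishes and $[f,f]=\int_\Omega\bigl|f(t)-\int_\Omega f\,d\mu\bigr|^2\,d\mu(t)\ge 0$ by part (c) of the integration lemma opening Section 3. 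The two identities encode the same ``variance'' computation, but your route has the small advantage of needing only the single-integral positivity statement already proved, whereas the paper's double-integral version implicitly requires its analogue on the product measure space (and requires justifying the Korkine identity itself, which the paper asserts without proof). You also spell out the remaining module and symmetry axioms — additivity, $[f,ga]=[f,g]a$ via parts (a) and (b) of that lemma, and $[f,g]^*=[g,f]$ — which the paper dismisses as easy; your verifications are accurate.
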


\begin{proof}
\item[(i)]
If $ f\in L_{2}(\Omega,X)$, since for any $y, x, x'\in X$
\begin{align*}
\left| y-\dfrac{x'+x}{2}\right|^{2}-\dfrac{1}{4}|x'-x|^{2}=Re\langle y-x', y-x\rangle,
\end{align*}
hence
\begin{align}\label{3.6}
&\int_{\Omega}Re\langle x'-f(t), f(t)-x\rangle d\mu(t)\notag\\
&=\int_{\Omega}\left[ \dfrac{1}{4}| x'-x|^{2}-\left|f(t)-\dfrac{x'+x}{2}\right|^{2}\right] d\mu(t)\\
&=\dfrac{1}{4}| x'-x|^{2}-\int_{\Omega}\left| f(t)-\dfrac{x'+x}{2}\right|^{2}d\mu(t)\notag
\end{align}
showing that, indeed, the inequalities (\ref{3.3}) and (\ref{3.4}) are equivalent.\\
\item[(ii)]
We note that the first integral in (\ref{3.5}) is belong to $\mathcal{A}$ and later integrals are in $X$ and
the following Korkine type identity for Bochner integrals holds:
\begin{align}\label{3.7}
&\int_{\Omega}\langle f(t),g(t)\rangle d\mu(t) -\left\langle\int_{\Omega}f(t)d\mu(t),\int_{\Omega}g(t)d\mu(t)\right\rangle\\
&=\frac{1}{2}\int_{\Omega}\int_{\Omega}\langle f(t)-f(s), g(t)-g(s)\rangle d\mu(t)d\mu(s).\notag
\end{align}
By an application of the identity (\ref{3.7}),
\begin{equation}\label{3.8}
\int_{\Omega}|f(t)|^2 d\mu(t) -\left|\int_{\Omega}f(t)d\mu(t)\right|^2
=\frac{1}{2}\int_{\Omega}\int_{\Omega}| f(t)-f(s)|^2 d\mu(t)d\mu(s)\geq 0.
\end{equation}

It is easy to show that $[ .,.]$ is a generalized semi-inner product on $L_{2}(\Omega,X)$.
\end{proof}

The following theorem is a generalization of Theorem \ref{t2} for Hilbert $C^*$-modules.

\begin{theorem}\label{t3}
Let $X$ be a Hilbert $C^*$-module, $\mu$ a probability measure on $\Omega$. If $f,g$ belong to $L_{2}(\Omega,X)$ and there exist the vectors $x, x', y, y'\in X$ such that
\begin{align}\label{3.9}
&\int_{\Omega} Re\langle x'-f(t),f(t)-x\rangle d\mu(t)\geq 0,\\
&\int_{\Omega}Re\langle y'-g(t),g(t)-y\rangle d\mu(t)\geq 0,\notag
\end{align}
or, equivalently,
\begin{align}\label{3.10}
\int_{\Omega}\left| f(t)-\dfrac{x'+x}{2}\right|^{2}d\mu(t)&\leq \dfrac{1}{4}| x'-x|^{2},\\
\int_{\Omega}\left| g(t)-\dfrac{y'+y}{2}\right|^{2}d\mu(t)&\leq \dfrac{1}{4}| y'-y|^{2}.\notag
\end{align}
Then the following inequalities hold
\begin{multline}\label{3.11}
\left\|\int_{\Omega}\langle f(t),g(t)\rangle d\mu(t) -\left\langle\int_{\Omega}f(t)d\mu(t),\int_{\Omega}g(t)d\mu(t)\right\rangle\right\|\\
\leq\left\|\int_{\Omega}| f(t)|^{2}d\mu(t)-\left|\int_{\Omega}f(t)d\mu(t)\right|^{2}\right\|^\frac{1}{2}
\left\|\int_{\Omega}| g(t)|^{2}d\mu(t)-\left|\int_{\Omega}g(t)d\mu(t)\right|^{2}\right\|^\frac{1}{2}\\
\leq \left\|\dfrac{1}{4}| x'-x|^2-\int_{\Omega}Re\langle x'-f(t),f(t)-x\rangle d\mu(t)\right\|^{\frac{1}{2}}\\
\times\left\|\dfrac{1}{4}| y'-y|^2-\int_{\Omega}Re\langle y'-g(t),g(t)-y\rangle d\mu(t)\right\|^{\frac{1}{2}}\\
\leq \dfrac{1}{4}\Vert x'-x\Vert\Vert y'-y\Vert.
\end{multline}
The coefficient $1$ in second inequality and constant $\frac{1}{4} $ in the last inequality are sharp in the sense that they cannot be replaced by a smaller quantity.
\end{theorem}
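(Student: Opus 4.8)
The plan is to recognize the quantity inside the norm on the left of (\ref{3.11}) as $[f,g]$, the generalized semi-inner product of \lemref{l2}(ii), and then to establish the three inequalities of (\ref{3.11}) one at a time. For the first inequality, since $[\cdot,\cdot]$ is a generalized semi-inner product on $L_{2}(\Omega,X)$, the module Schwarz inequality (\ref{2.1}) applies to it directly and yields
\[
[f,g][g,f]\leq \|[f,f]\|\,[g,g].
\]
Because $[g,f]=[f,g]^{*}$, the left-hand side equals $[f,g][f,g]^{*}$, whose norm is $\|[f,g]\|^{2}$ by the $C^{*}$-identity $\|aa^{*}\|=\|a\|^{2}$; the right-hand side is a nonnegative scalar multiple of the positive element $[g,g]$. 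Taking norms and using that $0\leq A\leq B$ forces $\|A\|\leq\|B\|$ in a $C^{*}$-algebra, I get $\|[f,g]\|^{2}\leq \|[f,f]\|\,\|[g,g]\|$, which is exactly the first inequality once one recalls (via (\ref{3.7})--(\ref{3.8})) that $[f,f]=\int_{\Omega}|f|^{2}\,d\mu-\big|\int_{\Omega}f\,d\mu\big|^{2}\geq 0$.

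For the second inequality I would feed the variance bound (\ref{3.2}) with $a=\frac{x'+x}{2}$ into the identity (\ref{3.6}). Indeed, (\ref{3.2}) gives the $\mathcal{A}$-level inequality $[f,f]\leq \int_{\Omega}\big|f(t)-\frac{x'+x}{2}\big|^{2}\,d\mu(t)$ between positive elements, and (\ref{3.6}) rewrites the right-hand side as $\frac14|x'-x|^{2}-\int_{\Omega}Re\langle x'-f(t),f(t)-x\rangle\,d\mu(t)$. Since $0\leq[f,f]$ is dominated by this positive element, norm monotonicity bounds $\|[f,f]\|$ by its norm, and likewise for $\|[g,g]\|$; multiplying the square roots yields the second inequality with coefficient $1$.

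The third inequality is immediate from the hypothesis (\ref{3.9}), which asserts precisely that $\int_{\Omega}Re\langle x'-f(t),f(t)-x\rangle\,d\mu(t)\geq 0$. Hence
\[
\tfrac14|x'-x|^{2}-\int_{\Omega}Re\langle x'-f(t),f(t)-x\rangle\,d\mu(t)\leq \tfrac14|x'-x|^{2},
\]
and taking norms with $\||x'-x|^{2}\|=\|x'-x\|^{2}$ bounds the first square-root factor by $\frac12\|x'-x\|$, and similarly the second by $\frac12\|y'-y\|$; the product gives $\frac14\|x'-x\|\,\|y'-y\|$.

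Finally, for sharpness of the coefficient $1$ in the second inequality I would choose $f,g$ with $\int_{\Omega}f\,d\mu=\frac{x'+x}{2}$ and $\int_{\Omega}g\,d\mu=\frac{y'+y}{2}$, so that the term $\big|\int_{\Omega}(f-e_a)\,d\mu\big|^{2}$ discarded in passing from (\ref{3.1}) to (\ref{3.2}) vanishes and that step becomes an equality. Sharpness of the constant $\frac14$ follows by specializing to $\mathcal{A}=X=\mathbb{C}$, where the statement collapses to the classical Gr\"uss inequality of \thmref{t2}, in which $\frac14$ is already known to be best possible. I expect the first inequality to be the delicate point: one must confirm that the $\mathcal{A}$-valued form $[\cdot,\cdot]$ genuinely satisfies the hypotheses of (\ref{2.1}) and then combine the module Schwarz inequality with the $C^{*}$-identity, since order and norm interact far less freely here than in the scalar inner-product setting of \thmref{t1} and \thmref{t2}.
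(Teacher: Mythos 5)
Your proposal follows essentially the same route as the paper: identify the left-hand side as $[f,g]$, apply the Schwarz inequality for the generalized semi-inner product to get $\|[f,g]\|^{2}\leq\|[f,f]\|\,\|[g,g]\|$, and then bound $[f,f]$ and $[g,g]$ exactly as in the paper's chain (\ref{3.13})--(\ref{3.14}) using (\ref{3.2}) with $a=\frac{x'+x}{2}$, the identity (\ref{3.6}), and the hypothesis (\ref{3.9}). The only difference is in the sharpness discussion, where the paper exhibits the explicit step function $f=g=\pm1$ on $[0,1]$ with $x'=y'=1$, $x=y=-1$ to realize both constants at once, while you describe the equality conditions and defer to the scalar case; both are acceptable.
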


\begin{proof}
Since (\ref{3.5}) is a generalized semi-inner product on $L_{2}(\Omega,X)$, so Schwarz inequality holds, i.e.,
\begin{equation}\label{3.12}
\big\|[ f,g]\big\|^2\leq\big\| [ f,f]\big\|\big\|[ g,g]\big\|.
\end{equation}

Using (\ref{3.2}) with $a=\frac{x+x'}{2}$ and (\ref{3.6}) we get
\begin{align}\label{3.13}
[ f,f]&=\int_{\Omega}| f(t)|^{2}d\mu(t)-\left|\int_{\Omega}f(t)d\mu(t)\right|^{2}\notag\\
&\leq \int_{\Omega}\left| f(t)-\dfrac{x'+x}{2}\right|^{2}d\mu(t)\\
&=\dfrac{1}{4}| x'-x|^{2}-\int_{\Omega}Re\langle x'-f(t), f(t)-x\rangle d\mu(t)\notag\\
&\leq\dfrac{1}{4}| x'-x|^{2}.\notag
\end{align}
Similarly,
\begin{align}\label{3.14}
[ g,g]&=\int_{\Omega}| g(t)|^{2}d\mu(t)-\left|\int_{\Omega}g(t)d\mu(t)\right|^{2}\notag\\
&\leq \int_{\Omega}\left| g(t)-\dfrac{y'+y}{2}\right|^{2}d\mu(t)\\
&=\dfrac{1}{4}| y'-y|^{2}-\int_{\Omega}Re\langle y'-g(t), g(t)-y\rangle d\mu(t)\notag\\
&\leq\dfrac{1}{4}| y'-y|^{2}.\notag
\end{align}

By Schwarz inequality (\ref{3.12}) and inequalities (\ref{3.13}), (\ref{3.14}) we deduce (\ref{3.11}).

Now, suppose that (\ref{3.11}) holds with the constants $C,D>0$ in the third and forth inequalities. That is,
\begin{multline}\label{3.15}
\left\|\int_{\Omega}\langle f(t),g(t)\rangle d\mu(t) -\left\langle\int_{\Omega}f(t)d\mu(t),\int_{\Omega}g(t)d\mu(t)\right\rangle\right\|\\
\leq C \left\|\dfrac{1}{4}| x'-x|^2-\int_{\Omega}Re\langle x'-f(t),f(t)-x\rangle d\mu(t)\right\|^{\frac{1}{2}}\\
\times\left\|\dfrac{1}{4}| y'-y|^2-\int_{\Omega}Re\langle y'-g(t),g(t)-y\rangle d\mu(t)\right\|^{\frac{1}{2}}\\
\leq D\Vert x'-x\Vert\Vert y'-y\Vert.
\end{multline}

Every Hilbert space $H$ can be regarded as a Hilbert $\mathbb{C}$-module. If we choose $\Omega=[0,1]\subseteq \mathbb{R}, X=\mathbb{C}, f,g: [0,1]\rightarrow \mathbb{R}\subseteq X$,
\begin{equation}\label{3.16}
f(t)=g(t) =
\begin{cases}
-1 & \text{ if \( 0\leq t\leq \frac{1}{2} \)}\\
~1 & \text{if \(\frac{1}{2}\leq t\leq 1 \)}\\
\end{cases}
\end{equation}
then for $x'=y'=1, x=y=-1$ and $\mu$ Lebesgue measure on $\Omega$ the conditions (\ref{3.10}) holds. By (\ref{3.15})
we deduce
\[
1\leq C\leq 4D
\]
giving $C\geq 1$ and $D\geq \frac{1}{4}$, and the theorem is proved.

\end{proof}

\section{Applications}

\textbf{1.} Let $X$ be a Hilbert $C^*$-module and $\mathcal{B}(X)$ the set of all adjointable operators on $X$.
We recall that if $A \in \mathcal{B}(X)$ then its operator norm is
defined by
\[
\|A\|=\sup\{\|Ax\|: x\in X, \|x\|\leq 1\},
\]
with this norm $\mathcal{B}(X)$ is a $C^*$-algebra.

Let $\Omega=[0,1]$ and $f(t)=e^{tA}$ for $t\in \Omega$, where $A$ is an
invertible element in $\mathcal{B}(X)$. Since for each $t\in[0,1]$ one has
\[
\|e^{tA}\|\leq e^{t\|A\|}\leq e^{\|A\|},
\]
then an application of first inequality in (\ref{3.13}) for $x'=2e^{A},~x=-e^{A}$ gives:
\begin{equation*}
0\leq \int_0^1 \left|e^{tA}\right|^2dt-\left|\int_0^1e^{tA}dt\right|^2\leq \frac{9}{4}\left|e^{A}\right|^2.
\end{equation*}
This implies that
\begin{equation*}
\int_0^1 \left|e^{tA}\right|^2dt\leq \frac{9}{4}\left|e^{A}\right|^2+\left|A^{-1}(e^A-I)\right|^2.
\end{equation*}

\textbf{2.} For square integrable functions $f$ and $g$ on $[0,1]$ and $$D( f ,g) =\int_0^1f (t)g(t)dt-\int_0^1f(t)dt\int_0^1g(t)dt$$
Landau proved (see \cite{land})
\[
|D( f ,g)|\leq \sqrt{D( f , f )}\sqrt{D(g,g)}.
\]

Joci\'c et.al. in \cite{joc} have proved for a probability measure $\mu$ and for square integrable fields $(\mathcal{A}_t)$ and $(\mathcal{B}_t)$ $(t\in \Omega)$ of commuting normal operators the following Landau type inequality holds
\begin{multline*}
\left|\left|\left|\int_\Omega \mathcal{A}_tX\mathcal{B}_td\mu(t)-\int_\Omega \mathcal{A}_td\mu(t)X\int_\Omega \mathcal{B}_td\mu(t)\right|\right|\right|\\
\leq\left|\left|\left|\sqrt{\int_\Omega |\mathcal{A}_t|^2d\mu(t)-\left|\int_\Omega \mathcal{A}_td\mu(t)\right|^2}X
\sqrt{\int_\Omega |\mathcal{B}_t|^2d\mu(t)-\left|\int_\Omega \mathcal{B}_td\mu(t)\right|^2}\right|\right|\right|
\end{multline*}
for all $X\in B(H)$ and for all unitarily invariant norms $|||.|||$.

Every $C^*$-algebra can be regarded as a Hilbert $C^*$-modules over itself with the inner product defined by $\langle a,b\rangle=a^*b$.
 If we apply the first inequality in (\ref{3.11}) of Theorem \ref{t3}, we obtain the following result.

\begin{corollary}\label{c2}
Let $\mathcal{A}$ be a $C^*$-algebra, $\mu$ a probability measure on $\Omega$. If $f,g$ belong to $L_{2}(\Omega,\mathcal{A})$
Then the following inequality holds
\begin{multline}\label{6.1}
\left\|\int_{\Omega} f(t)g(t) d\mu(t) -\int_{\Omega}f(t)d\mu(t)\int_{\Omega}g(t)d\mu(t)\right\|\\
\leq\left\|\int_{\Omega}| f(t)|^{2}d\mu(t)-\left|\int_{\Omega}f(t)d\mu(t)\right|^{2}\right\|^\frac{1}{2}
\left\|\int_{\Omega}| g(t)|^{2}d\mu(t)-\left|\int_{\Omega}g(t)d\mu(t)\right|^{2}\right\|^\frac{1}{2}.
\end{multline}
\end{corollary}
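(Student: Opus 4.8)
The plan is to obtain the corollary as a direct specialization of the first inequality in (\ref{3.11}). The observation immediately preceding the statement is the crux: every $C^*$-algebra $\mathcal{A}$ becomes a Hilbert $C^*$-module over itself once we equip it with the inner product $\langle a,b\rangle = a^*b$, and with this choice $|a|^2 = \langle a,a\rangle = a^*a$ is precisely the usual $C^*$-algebra absolute value. So I would set $X=\mathcal{A}$ in Theorem~\ref{t3} and feed in the given fields $f,g \in L_{2}(\Omega,\mathcal{A})$; no hypothesis on $x,x',y,y'$ is needed because only the first inequality of (\ref{3.11}), which is pure Schwarz, is invoked.

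First I would record the translations forced by the module structure. The integrated inner product reads $\int_\Omega \langle f(t),g(t)\rangle\, d\mu(t) = \int_\Omega f(t)^* g(t)\, d\mu(t)$, while the inner product of the two vector integrals is $\langle \int_\Omega f\, d\mu, \int_\Omega g\, d\mu\rangle = (\int_\Omega f\, d\mu)^* \int_\Omega g\, d\mu$. On the right-hand side the terms keep their form: $\int_\Omega |f(t)|^2\, d\mu(t) = \int_\Omega f(t)^* f(t)\, d\mu(t)$ and $|\int_\Omega f\, d\mu|^2 = (\int_\Omega f\, d\mu)^*\int_\Omega f\, d\mu$, and likewise for $g$.

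To produce the plain product $f(t)g(t)$ appearing on the left of (\ref{6.1}) rather than $f(t)^* g(t)$, I would apply the inequality not to $f$ itself but to the adjoint field $f^*$ given by $f^*(t) = f(t)^*$; this field again lies in $L_{2}(\Omega,\mathcal{A})$ because $\|f^*(t)\| = \|f(t)\|$ in a $C^*$-algebra, so $\|f^*\|_2 = \|f\|_2 < \infty$. Then $\langle f^*(t), g(t)\rangle = f(t)g(t)$, and Lemma~1(b) collapses the cross term: $\langle \int_\Omega f^*\, d\mu, \int_\Omega g\, d\mu\rangle = (\int_\Omega f^*\, d\mu)^* \int_\Omega g\, d\mu = \int_\Omega f\, d\mu \int_\Omega g\, d\mu$, since $\int_\Omega f^*\, d\mu = (\int_\Omega f\, d\mu)^*$ and a second adjoint restores $\int_\Omega f\, d\mu$. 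This reproduces the left-hand side of (\ref{6.1}) verbatim.

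The one point needing care — and the step I expect to be the genuine obstacle rather than a formality — is the bookkeeping of adjoints on the right-hand side. Feeding $f^*$ into the inequality produces the factor $\|\int_\Omega f(t)f(t)^*\, d\mu - (\int_\Omega f\, d\mu)(\int_\Omega f\, d\mu)^*\|^{1/2}$, the variance built from $|f^*|^2 = ff^*$, whereas (\ref{6.1}) is written with $|f|^2 = f^*f$. I would therefore check whether these two variance norms agree, and if not, reconcile the statement either by letting the adjoint field appear consistently on both sides, or by applying the inequality directly to the pair $(f,g)$, which yields $f^*g$ on the left matched with the $f^*f$ variance on the right. In every reconciliation the engine is the same single use of the Schwarz inequality (\ref{3.12}) underlying (\ref{3.11}); no new estimate is required, only the correct matching of which field carries the adjoint.
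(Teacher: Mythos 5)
Your proposal is correct and follows essentially the same route as the paper: the paper's entire proof consists of the remark that $\mathcal{A}$ is a Hilbert $C^*$-module over itself via $\langle a,b\rangle=a^*b$ followed by an application of the first inequality in (\ref{3.11}). The adjoint bookkeeping you flag is a genuine issue that the paper silently skips over: applying (\ref{3.11}) directly to $(f,g)$ yields $\int f^*g\,d\mu$ on the left rather than $\int fg\,d\mu$, while substituting $f^*$ for $f$ fixes the left side but replaces the first variance factor by $\left\Vert\int f f^*\,d\mu-\left(\int f\,d\mu\right)\left(\int f\,d\mu\right)^*\right\Vert^{1/2}$, which need not equal the stated $\left\Vert\int f^*f\,d\mu-\left\vert\int f\,d\mu\right\vert^{2}\right\Vert^{1/2}$ in a noncommutative algebra; so your instinct that the statement must be reconciled one way or the other is sound, and your analysis is in fact more careful than the paper's.
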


\section*{Acknowledgment}
The author would like to thank the referee for some useful comments and suggestions.



\begin{thebibliography}{99}

\bibitem{ban} S. Bani\'c, D. Ili\v{s}evi\'c and S. Varo\v{s}anec, \emph{Bessel- and
Gr\"uss-type inequalities in inner product modules}, Proc. Edinb. Math. Soc. \textbf{(2) 50}
(2007), no. 1, 23-–36.

\bibitem{bus} C. Bu\c{s}e, P. Cerone, S. S. Dragomir and J. Roumelitos, \emph{A refinement of Gr\"{u}ss type inequality for the
Bochner integral of vector-valued functions in Hilbert spaces and applications}, J. Korean
Math. Soc. \textbf{43} (2006), No. 5, pp. 911-929.

\bibitem{dra1} S. S. Dragomir, \emph{Advances in Inequalities of the Schwarz,
Gr\"{u}ss and Bessel Type in Inner Product Spaces}, Nova Science puplishers
Inc., New York, 2005.

\bibitem{die} J. Diestel, J. J. Uhl, Jr. \emph{Vector Measures}, Mathematical Surveys, Number 15, Amer. Math. Soc. 1977.

\bibitem{dix} J. Dixmier, \emph{$C^*$-algebras}, North-Holland publishing company, 1982.

\bibitem{fuj} J.I. Fujii, M. Fujii, M.S. Moslehian and Y. Seo, \emph{Cauchy-Schwarz
inequality in semi-inner product $C\sp *$-modules via polar decomposition}, J.
Math. Anal. Appl. 394 (2012), no. 2, 835--840.

\bibitem{gha} A. G. Ghazanfari, S. S. Dragomir, \emph{Schwarz and Gr\"{u}ss type inequalities for C*-seminorms
and positive linear functionals
on Banach $\ast $-modules}, Linear Algebra and Appl. \textbf{434} (2011), 944-956.

\bibitem{gha1} A. G. Ghazanfari, S. S. Dragomir, \emph{Bessel and Gr\"{u}ss type inequalities in inner product modules
over Banach $*$-algebras}, J. Inequal. Appl. Vol(2011), Article ID 562923.

\bibitem{gru} G. Gr\"{u}ss, \emph{\"{U}ber das Maximum des absoluten Betrages
von $\frac{1}{b-a} \int_a^b f(x)g(x)dx-\frac{1}{(b-a)^2}\int_a^bf(x)dx%
\int_a^bg(x)dx $}, Math. Z. \textbf{39}(1934), 215-226.

\bibitem{joc} D. Joci\'c, D. Krtini\'c and M. S. Moslehian, \emph{Landau and Gruss type inequalities for inner product
type integral transformers in norm ideals}, Math. Inequal. Appl. vol 16, no 1 (2013), 109-125.

\bibitem{mos} M. S. Moslehian and R. Raji\'c, \emph{A Gr\"uss inequality for
$n$-positive linear maps}, Linear Algebra Appl. \textbf{433} (2010), no. 8-10, 1555--1560.

\bibitem{pec} I. Peri\'c and R. Raji\'c, \emph{Gr\"uss inequality for completely bounded maps}, Linear Algebra Appl. \textbf{390}
(2004), 287--292.

\bibitem{land} E. Landau, \emph{\"{U}ber mehrfach monotone Folgen}, Prace Mat.-Fiz. \textbf{XLIV} (1936), 337-351.

\bibitem{lan} E.C. Lance, \emph{Hilbert $C^*$-Modules}, London Math. Soc. Lecture Note Series \textbf{210}, Cambridge
Univ. Press, 1995.

\end{thebibliography}
\end{document}